\newtheorem{theorem}{Theorem}
\newtheorem{proposition}{Proposition}
\newtheorem{lemma}{Lemma}
\def\R{{\mathbb R}}
\def\Rd{{\mathbb R}^d}
\def\P{{\cal P}}
\def\PROB {{\mathbb P}}
\def\EXP {{\mathbb E}}
\def\IND{{\mathbb I}}
\def\Var{{\mathbb Var}}
\begin{document}
\begin{titlepage}
\thispagestyle{empty}
\setcounter{page}{0}

\title{Strongly universally consistent nonparametric regression and classification with privatised data}
\author{ Thomas Berrett\thanks{ Department of Statistics, University of Warwick, Coventry, CV4 7AL, United Kingdom. tom.berrett@warwick.ac.uk}
\and L\'aszl\'o Gy\"orfi\thanks{Department of Computer Science and Information Theory, Budapest University of Technology and Economics, Magyar Tud\'{o}sok krt. 2., Budapest, H-1117, Hungary.
\break
gyorfi@cs.bme.hu}
\and Harro Walk\thanks{Institute of Stochastics and Applications, University of Stuttgart, Pfaffenwaldring 57, D-70569 Stuttgart, Germany. harro.walk@mathematik.uni-stuttgart.de}}

\maketitle

\begin{abstract}
In this paper we revisit the classical problem of nonparametric regression, but impose local differential privacy constraints. Under such constraints, the raw data $(X_1,Y_1),\ldots,(X_n,Y_n)$, taking values in $\mathbb{R}^d \times \mathbb{R}$, cannot be directly observed, and all estimators are functions of the randomised output from a suitable privacy mechanism. The statistician is free to choose the form of the privacy mechanism, and here we add Laplace distributed noise to a discretisation of the location of a feature vector $X_i$ and to the value of its response variable $Y_i$.
Based on this randomised data, we design a novel estimator of the regression function, which can be viewed as a privatised version of the well-studied partitioning regression estimator.
The main result is that the estimator is strongly universally consistent.
Our methods and analysis also give rise to a strongly universally consistent binary classification rule for locally differentially private data.
\end{abstract}

\noindent

{\sc AMS Classification}: 62G08, 62G20.

\noindent

{\sc Key words and phrases}: regression estimate, classification, local differential privacy, universal consistency

\end{titlepage}

\section{Introduction}

In recent years there has been a surge of interest in data analysis methodology that is able to achieve strong statistical performance without comprimising the privacy and security of individual data holders. This has often been driven by applications in modern technology, for example by Google~\citep{erlingsson2014rappor}, Apple~\citep{tang2017privacy}, and Microsoft~\citep{ding2017collecting}, but the study goes at least as far back as~\citet{warner1965randomized} and is often used in more traditional fields of clinical trials~\citep{vu2009differential,dankar2013practicing} and census data~\citep{machanavajjhala2008privacy,dwork2019differential}. While there has long been an awareness that sensitive data must be anonymised, it has become apparent only relatively recently that simply removing names and addresses is insufficient in many cases \citep[e.g.][]{sweeney2002k,rocher2019estimating}. The concept of differential privacy \citep{dwork2006calibrating} was introduced to provide a rigorous notion of the amount of private information on individuals published statistics contain. Statistical treatments of this framework include~\citet{wasserman2010statistical,lei2011differentially,avella2019differentially,cai2019cost}

Although it is a suitable constraint for many problems, procedures that are differentially private often require the presence of a third party, who may be trusted to handle the raw data before statistics are published. To address this shortcoming, the local differential privacy constraint \citep[see, for example,][and the references therein]{kairouz2014extremal,duchi2018minimax} was introduced to provide a setting where analysis must be carried out in such a way that each raw data point is only ever seen by the original data holder. The simplest example of a locally differentially private mechanism is the randomised response~\citep{warner1965randomized} used with binary data, but mechanisms have also been developed for tasks such as classification~\citep{berrett2019classification}, generalised linear modelling~\citep{duchi2018minimax}, empirical risk minimisation~\citep{wang2018empirical}, density estimation~\citep{butucea2020local}, functional estimation~\citep{rohde2018geometrizing} and goodness-of-fit testing~\citep{berrett2020locally}.

Regression is a cornerstone of modern statistical analysis, routinely used across the sciences and beyond.
We recall that, in a standard stochastic model, a regression estimator  predicts for an observed $d$ dimensional random feature vector an unknown random response, with finite second moment. The regression function, given by the conditional expectation of the response given the feature vector, achieves minimum mean squared error. %(minimum $L_2$-error, residual variance). 
Typically, the statistician does not know the underlying stochastic structure, but has access to a corresponding finite sample of independent identically distributed design-response vectors in $\R^d \times \R$, and on this basis estimates the regression function. The background will be given below at the beginning of Section 2, and in the following we shall refer several times to the monograph of \citet{gyorfi2006distribution}.
A binary classification (pattern recognition) rule predicts for a feature vector an unknown random response taking values in $\{-1,1\}$. The so-called Bayes decision rule achieves minimum error probability (Bayes error). Given a finite sample of i.i.d. design-response vectors in  $\R^d \times \{-1, 1\}$, the  Bayes rule is approximated. We formulate the setup in  Section 4, while  the monograph of  \citet{devroye2013probabilistic} contains a detailed theory of nonparametric classification.

While regression has been relatively well-studied in the non-local model of differential privacy \citep[e.g.][]{cai2019cost}, results in the local model are scarce. \citet{zheng2017collect} studies sparse linear regression, kernel ridge regression and GLMs. \citet{smith2017interaction,wang2018empirical} study parametric empirical risk minimisation. \citet{wang2018high} studies sparse linear regression. \citet{duchi2018minimax,duchi2018right} study GLMs. The recent work~\citet{farokhi2020deconvoluting} concerns a relaxed version of the locally private regression model where responses can be observed exactly, and empirically studies a Nadaraya--Watson-type estimator, but we are unaware of any other work on locally private nonparametric regression. The simpler problem of binary classification is studied in~\citep{berrett2019classification}, but there are significant additional challenges in designing a suitable estimator for the regression problem.

In this paper we introduce and investigate a new method for nonparametric regression under $\alpha$-local differential privacy constraints and also present a corresponding classification rule.
For regression our procedure combines a simple non-interactive privacy mechanism with a cubic partitioning regression estimate modifying the regressogram, which was originally introduced by~\citet{tukey1947non} and has been well-studied since \citep[see, e.g.,][Chapter~4 and Section 23.1, and the references therein]{gyorfi2006distribution}. In Section 3 we describe the procedure and state that the sequence of estimates is strongly universally consistent, in that the $L_2$-risk converges almost surely to zero  in the large sample limit for any data-generating distribution for which the response has a finite second moment. Let us mention that in the degenerate case without privacy the estimator reduces to the strongly universally consistent partitioning estimator of \citet{gyorfi1991universal}.
The problem of classification is strictly easier than regression, therefore our methods and analysis also give rise to a strongly universally consistent binary classification rule for locally differentially private data.

The remainder of the paper is organised as follows. In Section~\ref{Sec:Prelim} we introduce the necessary background on regression and local differential privacy. In Section~\ref{Sec:MainRegression} we introduce our privacy mechanism and estimators, and state our main results in the regression setting. In Section~\ref{Sec:Classification} we study the consequences of the results in Section~\ref{Sec:MainRegression} for binary classification. All proofs will be deferred to Section~\ref{Sec:Proofs}. We intend to investigate the rates of convergence for the locally private regression problem in a subsequent paper.

\section{Preliminaries}
\label{Sec:Prelim}

\subsection{Background and non-private setting}
Let $(X,Y)$ be a pair of random variables such that the feature vector $X$ takes values in $\R^d$
and its response variable $Y$ is a real-valued random variable with $\EXP[Y^2]<\infty$.
We denote by $ \mu $ the distribution of the feature vector $X$, that is, for all measurable sets $A\subset \R^d$, we have
$\mu(A)=\PROB\{X\in A\}$.
Then the \emph{regression function}
\begin{equation}
\label{def}
 m(x) = \EXP [Y \mid X=x]
\end{equation}
is well defined for $\mu$-almost all $x$.
For each measurable function $g:\R^d\to\R$ one has
\[
\EXP \left[ \{ g(X) - Y\}^{2}\right] = \EXP  \left[\{ m(X) - Y \}^{2} \right]  + \EXP  \left[\{ m(X) - g(X) \}^{2} \right],
\]
therefore, with the notation
\[
L^*=\EXP  \left[\{ m(X) - Y \}^{2} \right]~,
\]
we have
\begin{equation}
\label{Eq:RiskDecomp}
\EXP \left[ \{ g(X) - Y\}^{2}\right] = L^*  + \int \{ m(x) - g(x) \}^{2} \mu(dx).
\end{equation}
We measure the performance of an estimator $\hat{m}$ of $m$ through the loss function
\[
	L(m, \hat{m}) := \int \{m(x) - \hat{m}(x)\}^2 \mu(dx),
\]
which, by~\eqref{Eq:RiskDecomp}, may be interpreted as the excess prediction risk for a new observation $X$.

In this paper we are mainly concerned with regression estimates $\hat{m}$ based on partitions of the sample space, which were originally studied by~\citet{tukey1947non}. Let $\P_h=\{A_{h,1},A_{h,2},\ldots\}$ be a cubic partition
of $\Rd$ such that the cells $A_{h,j}$ are cubes of volume
$h^d$.  If $x_{h,j}$ denotes the center of the cube $A_{h,j}$, then introduce the discretization of $x$ by the quantiser
\begin{align*}
Q_h(x):=x_{h,j}, \mbox{ if } x\in A_{h,j}.
\end{align*}
The raw data will be independent and identically distributed copies
\begin{align*}
\mathcal{D}_n := \{(X_1,Y_1),\ldots,(X_n,Y_n)\}
\end{align*}
of the random vector $(X,Y)$, and the estimators that we consider will be (randomised) functions of the binned data, defined by
\[
\{(Q_h(X_1),Y_1),\ldots,(Q_h(X_n),Y_n)\}.
\]
Using this binned data, when we do not have to satisfy privacy constraints, one may create a scheme for a public data set as follows: there are $n$ individuals in the study such that individual $i$ generates the sample pair $(X_i,Y_i)$ and he submits the discretised version $(Q_h(X_i),Y_i)$ to a data collector. The data collector calculates the empirical distributions
\begin{align*}
\nu_n(A_{h,j})=\frac 1n \sum_{i; Q_h(X_i)=x_{h,j}}Y_i
\end{align*}
and
\begin{align*}
\mu_n(A_{h,j})=\frac 1n \sum_{i; Q_h(X_i)=x_{h,j}} 1.
\end{align*}
Then, the public data set
\begin{align*}
D_{n,h}=\{( j, \nu_n(A_{h,j}),\mu_n(A_{h,j})); \mu_n(A_{h,j})>0\}
\end{align*}
is published. The data set $D_{n,h}$ has the favourable property that the size $\#(D_{n,h})$ is much less than $n$ \citep[cf.][]{lugosi1999adaptive}.

For binned data, the partitioning regression estimate is defined by
\[
m'_{n}(x)= \frac{\sum_{i=1}^n Y_i \IND_{\{X_i\in A_{h_n,j}\}} }
   {\sum_{i=1}^n \IND_{\{X_i\in A_{h_n,j}\}}}\qquad \mbox{if } x\in A_{h_n,j},
\]
where $0/0$ is $0$ by definition and $\IND$ denotes the indicator function.
In order to have strong universal consistency, we modify the partitioning regression estimate as follows:
\[
m_{n}(x)= \frac{ \nu_n(A_{h_n,j}) }{ \mu_n(A_{h_n,j})}\IND_{\{\mu_n(A_{h_n,j})\ge \log n/n\}}    \qquad \mbox{if } x\in A_{h_n,j}.
\]

\begin{theorem}
\label{bcons}
(Theorem 23.3 in  \cite{gyorfi2006distribution}.)
If
\[
\lim_{n\to\infty}{ h}_n=0\, \mbox{ and }\, \lim_{n\to\infty}n{ h}_n^{d}/\log n =\infty,
\]
then  the estimate $ m_{n}$ is strongly universally consistent, i.e.,
\begin{align}
\label{conv}
\lim_{n\to \infty} \int ( m(x) - m_n(x) )^{2} \mu(dx)=0
\end{align}
a.s. for any distribution of $(X,Y)$ with $\EXP Y^2<\infty$.
\end{theorem}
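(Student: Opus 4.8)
\emph{Proof strategy.} The plan is to split the $L_2$-error into a deterministic approximation (bias) term and a stochastic estimation (variance) term, treat the first by a density argument in $L_2(\mu)$, and reduce the second — via truncation of the response — to a bounded problem that can be controlled by exponential concentration inequalities and the Borel--Cantelli lemma. Write $A_n(x)$ for the cell of $\P_{h_n}$ containing $x$, put $\mu(A)=\PROB\{X\in A\}$ and $\nu(A)=\EXP[Y\IND_{\{X\in A\}}]$, and introduce the $\mu$-local average $m_n^\ast(x) = \nu(A_n(x))/\mu(A_n(x))$ (with $0/0:=0$), so that $m_n^\ast(x)=\EXP[m(X)\mid X\in A_n(x)]$. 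By $(a+b)^2\le 2a^2+2b^2$,
\[
\int (m(x)-m_n(x))^2\,\mu(dx) \le 2\int (m(x)-m_n^\ast(x))^2\,\mu(dx) + 2\int (m_n^\ast(x)-m_n(x))^2\,\mu(dx).
\]
Since $\EXP[Y^2]<\infty$ we have $m\in L_2(\mu)$, and as the cubic cells have diameter $\sqrt{d}\,h_n\to 0$, approximating $m$ in $L_2(\mu)$ by a uniformly continuous, compactly supported function shows that the first term tends to $0$ deterministically. It thus remains to prove $\int (m_n^\ast-m_n)^2\,d\mu\to 0$ almost surely.

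I would first reduce this to a bounded response. Fix $L>0$, set $Y_{i,L}=\max\{-L,\min\{L,Y_i\}\}$, let $m_L(x)=\EXP[Y_{1,L}\mid X_1=x]$, and let $m_{n,L},\,m_{n,L}^\ast$ be built from $(Q_{h_n}(X_i),Y_{i,L})$ in the obvious way. On one hand $\int (m_L-m)^2\,d\mu\le\EXP[(Y_{1,L}-Y_1)^2]\to 0$ as $L\to\infty$ by dominated convergence. On the other hand, since $\mu_n(A)$ does not depend on the responses, on each cell $A$ one has, by Cauchy--Schwarz, $(m_n(x)-m_{n,L}(x))^2\le \mu_n(A)^{-1}\cdot\tfrac1n\sum_{X_i\in A}(Y_i-Y_{i,L})^2$, so that $\int (m_n-m_{n,L})^2\,d\mu\le \sum_j \tfrac{\mu(A_{h_n,j})}{\mu_n(A_{h_n,j})}\cdot\tfrac1n\sum_{X_i\in A_{h_n,j}}(Y_i-Y_{i,L})^2$; using that all but finitely many cells carry negligible $\mu$-mass and the strong law of large numbers, $\limsup_n$ of the right-hand side is at most a constant multiple of $\EXP[(Y_1-Y_{1,L})^2]$ a.s. Hence it suffices to show, for each fixed $L$, that $\int (m_{n,L}^\ast-m_{n,L})^2\,d\mu\to 0$ a.s., and then let $L$ run through a countable sequence tending to infinity.

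With $|Y|\le L$ fixed, write the estimation term as $\sum_j \mu(A_{h_n,j})\bigl(\tfrac{\nu_n(A_{h_n,j})}{\mu_n(A_{h_n,j})}\IND_{\{\mu_n(A_{h_n,j})\ge \log n/n\}} - \tfrac{\nu(A_{h_n,j})}{\mu(A_{h_n,j})}\bigr)^2$ and split the cells according to whether $\mu(A_{h_n,j})$ is large or small. On cells whose $\mu$-mass exceeds a slowly decaying threshold, the identity $\tfrac{\nu_n}{\mu_n}-\tfrac{\nu}{\mu}=\tfrac{\nu_n-\nu}{\mu_n}+\tfrac{\nu}{\mu}\cdot\tfrac{\mu-\mu_n}{\mu_n}$ together with $|\nu/\mu|\le L$ reduces the problem to the deviations of $\mu_n$ and $\nu_n$ from $\mu$ and $\nu$; a Hoeffding/Bernstein bound combined with a union bound over the $O((R_n/h_n)^d)$ cells meeting a growing ball of radius $R_n$ is summable in $n$ precisely because $n h_n^d/\log n\to\infty$, and Borel--Cantelli removes this part almost surely. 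The contribution of cells with small $\mu$-mass — including those on which $\IND_{\{\mu_n\ge\log n/n\}}$ vanishes — is bounded by $L^2\sum_{j:\,\mu(A_{h_n,j})\text{ small or }\mu_n(A_{h_n,j})<\log n/n}\mu(A_{h_n,j})$, which tends to $0$ a.s.: since $\sum_j\mu(A_{h_n,j})=1$ all but finitely many cells have tiny mass, and because $\log n/n\to 0$ every cell of non-negligible mass satisfies $\mu_n\ge\log n/n$ eventually. (The modification by $\IND_{\{\mu_n\ge\log n/n\}}$ is exactly what keeps the ratio from blowing up on undersampled cells.)

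The main obstacle is this last step: obtaining \emph{almost-sure} rather than in-probability convergence of the estimation term, which forces a passage from simple variance bounds to exponential tail bounds together with a union bound over a number of cells that grows with $n$; it is here that the hypothesis $n h_n^d/\log n\to\infty$ enters essentially, matching the $\log n/n$ threshold in the definition of $m_n$, while $h_n\to 0$ is what drives the deterministic approximation term to zero. A secondary technical point is to make the truncation reduction work in the almost-sure sense, i.e.\ to control the random weights $\mu(A_{h_n,j})/\mu_n(A_{h_n,j})$ uniformly enough for the strong law of large numbers to apply.
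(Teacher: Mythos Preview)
The paper does not prove this statement: Theorem~\ref{bcons} is quoted verbatim as Theorem~23.3 of \cite{gyorfi2006distribution} and is used as a black box in the proof of Theorem~\ref{Thm:UniversalConsistency} (via~\eqref{conv} and~\eqref{c1}). There is therefore no ``paper's own proof'' to compare against.

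That said, your outline is a faithful sketch of the standard argument in \cite{gyorfi2006distribution}: the bias/variance split through the population local average $m_n^\ast$, the density argument for the approximation term, truncation to reduce to bounded $Y$, and Hoeffding/Bernstein plus a union bound over $O(h_n^{-d})$ cells meeting a growing ball---with $nh_n^d/\log n\to\infty$ supplying exactly the summability needed for Borel--Cantelli---is precisely the architecture of that proof. You have also correctly identified the genuine sticking point: controlling the random weights $\mu(A_{h_n,j})/\mu_n(A_{h_n,j})$ \emph{almost surely} in the truncation step. Your proposed handling (``all but finitely many cells have negligible mass, then SLLN'') is too loose as written, since the partition changes with $n$ and the ratio can be large on undersampled cells; in the book this is dealt with by a uniform ratio bound (e.g.\ $\mu(A)/\mu_n(A)\le 2$ simultaneously over cells with $\mu_n(A)\ge\log n/n$, again via exponential inequalities and the same $nh_n^d/\log n$ condition), not by a bare SLLN. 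With that lemma in place your sketch goes through.
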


There is a huge literature on weak and strong universal consistency of regression estimates. Weak universal consistency means convergence
\begin{align*}
\lim_{n\to \infty} \int \EXP \left[ \{ m(x) - m_n(x) \}^{2} \right] \mu(dx)=0
\end{align*}
for any distribution of $(X,Y)$ with $\EXP Y^2<\infty$.
For the weak universal consistency of local averaging regression estimates $m_n$, which includes partitioning estimates, kernel estimates and nearest neighbor estimates,
we refer to Chapters 4 - 6 in \citet{gyorfi2006distribution}.

\subsection{Local differential privacy}

When working under privacy constraints, our estimator will not have direct access to the raw data $\mathcal{D}_n$, or even the binned data $\mathcal{D}_{n,h}$.
Instead, it will only be allowed to depend on randomised data $(Z_1,\ldots,Z_n)$, defined on some measurable space $(\mathcal{Z}^n, \mathcal{B}^n)$, that has been generated conditional on $\mathcal{D}_n$. A \emph{privacy mechanism} will be a conditional distribution $Q : \mathcal{B}^n \times (\mathbb{R}^d \times \mathbb{R})^{\otimes n} \rightarrow [0,1]$ with the interpretation that
\[
	(Z_1,\ldots,Z_n) | \{\mathcal{D}_n =\{(x_1,y_1),\ldots, (x_n,y_n)\} \} \sim Q( \cdot | (x_1,y_1), \ldots, (x_n,y_n) ).
\]
This privacy mechanism will be said to be \emph{sequentially interactive} \citep{duchi2018minimax} if it respects the graphical structure

\begin{center}
\begin{tikzpicture}[->]
  \tikzstyle{vertex}=[circle,minimum size=17pt,inner sep=0pt]

%\foreach \name/\text/\x in {X1/X_1/0, X2/X_2/2.5, X3/X_3/5, Xdots/\cdots/7.5, Xn/X_n/10}
\foreach \name/\text/\x in {X1/(X_1\,Y_1)/0, X2/(X_2\,Y_2)/3, Xdots/\cdots/6, Xn/(X_n\,Y_n)/9}
\node[vertex](\name) at (\x,0) {$\text$};

%\foreach \name/\text/\x in {Z1/Z_1/0, Z2/Z_2/2.5, Z3/Z_3/5, Zdots/\cdots/7.5, Zn/Z_n/10}
\foreach \name/\text/\x in {Z1/Z_1/0, Z2/Z_2/3, Zdots/\cdots/6, Zn/Z_n/9}
\node[vertex](\name) at (\x,-2) {$\text$};

\draw (X1) -- (Z1);
\draw (X2) -- (Z2);
%\draw (X3) -- (Z3);
\draw (Xdots) -- (Zdots);
\draw (Xn) -- (Zn);
\draw (Z1) -- (Z2);
%\draw (Z1) to [out=-10, in= -170] (Z3);
\draw (Z1) to [out=-10, in=-170] (Zdots);
\draw (Z1) to [out=-20, in=-160] (Zn);
\draw (Z2) to (Zdots);
%\draw (Z2) -- (Z3);
%\draw (Z2) to [out=-10,in=-170] (Zdots);
\draw (Z2) to [out=-20, in=-160] (Zn);
%\draw (Z3) -- (Zdots);
\draw (Zdots) -- (Zn);
\end{tikzpicture}
\end{center}

\noindent In particular, this requires that $Z_i \perp \!\!\! \perp (X_j,Y_j) | \{X_i,Y_i,Z_1,\ldots,Z_{i-1}\}$ for any $j \neq i$, so that $Z_i$ is generated with only the knowledge of $(X_i,Y_i)$ and $Z_1,\ldots,Z_{i-1}$. For this reason, such privacy mechanisms are locally private. Sequentially interactive privacy mechanisms may be specified by a sequence $(Q_1,\ldots,Q_n)$ of conditional distributions with $Q_i : \mathcal{B} \times (\mathbb{R}^d \times \mathbb{R}) \times \mathcal{Z}^{i-1} \rightarrow [0,1]$ and with the interpretation that
\[
	Z_i | \{(X_i,Y_i)\!=\!(x_i,y_i), Z_1=z_1, \ldots, Z_{i-1}=z_{i-1}\} \sim Q_i( \cdot | (x_i,y_i),z_1, \ldots,z_{i-1}).
\]
Given $\alpha>0$, a sequentially interactive mechanism specified by $(Q_1,\ldots,Q_n)$ will be said to be $\alpha$-\emph{locally differentially private} ($\alpha$-LDP) if
\[
	\sup_{A \in \mathcal{B}} \sup_{z_1,\ldots,z_{i-1} \in \mathcal{Z}} \sup_{(x_i,y_i),(x_i',y_i') \in \mathbb{R}^d \times \mathbb{R}} \frac{Q_i(A | (x_i,y_i), z_1,\ldots,z_{i-1})}{Q_i(A | (x_i',y_i'), z_1,\ldots,z_{i-1})} \leq e^\alpha
\]
for each $i=1,\ldots,n$. Let $\mathcal{Q}_\alpha$ denote the set of all $\alpha$-LDP privacy mechanisms. In the remainder of this paper, we will consider estimators $\hat{m}_n$ that are measurable functions of some $(Z_1,\ldots,Z_n)$ that has been generated by an $\alpha$-LDP privacy mechanism applied to the raw data $\mathcal{D}_n$.

The privacy mechanisms that we will consider here are actually of a simple, \emph{non-interactive} form, whereby we also have
\[
	Z_i \perp \!\!\!  \perp (X_j,Y_j,Z_j)
\]
for all $j \neq i$. In this case we have
\[
	Q(A_1,\ldots,A_n | (x_1, y_1), \ldots, (x_n,y_n)  ) = \prod_{i=1}^n Q_i(A_i | (x_i,y_i))
\]
for all $(A_1,\ldots,A_n) \in \mathcal{B}^n$. Such mechanisms satisfy the $\alpha$-LDP constraint if and only if
\[
	\sup_{A \in \mathcal{B}} \sup_{(x_i,y_i), (x_i',y_i') \in \mathbb{R}^d d\times \mathbb{R}} \frac{Q_i(A | x_i,y_i)}{Q_i(A | x_i',y_i')} \leq e^\alpha
\]
for each $i=1,\ldots,n$. Non-interactive mechanisms are computationally attractive in practice as they require minimal communication between the statistician and the orginal data holders, and in large-scale applications there are many practical barriers to interactivity \citep{joseph2019role}. In fact, we will see that in the problem considered here we are able to construct simple, non-interactive privacy mechanisms that give rise to intepretable, strongly universally consistent estimators.

\section{Our regression estimation method and its strong universal consistency}
\label{Sec:MainRegression}

Similarly to \cite{berrett2019classification} we consider locally privatised data given as follows:
the privacy mechanism is formulated by independent double arrays $\{\epsilon_{i,j}\}$ and $\{\zeta_{i,j}\}$ such that the elements of the arrays are i.i.d. with centred, unit-variance Laplace distributions.
For $i=1,\ldots,n$ and for $0<M_n\le \infty$,
write $[Y_i]_{-M_n}^{M_n} = \min\{M_n, \max(Y_i,-M_n) \}$ for the truncated response; it will be sometimes be convenient to write $[Y_i]_{-\infty}^{\infty} =Y$ for no truncation.
Choose a sphere $S_n$ centered at the origin.
Assume that the cells $A_{h,j}$ are numbered such that $A_{h,j}\cap S_n\ne \emptyset$ when $j\le N_n$ for some integer $N_n>0$, and $A_{h,j}\cap S_n= \emptyset$ otherwise.
Individual $i\le n$ generates and transmits the data
\begin{equation}
\label{Eq:Mech1}
	Z_{i,j}:= [Y_i]_{-M_n}^{M_n} \IND_{\{X_i \in A_{h,j}\}} + \sigma_Z \epsilon_{i,j}, \quad j\le N_n	
\end{equation}
and
\begin{equation}
\label{Eq:Mech2}
W_{i,j}:=\IND_{\{X_i\in A_{h,j} \}} + \sigma_W \zeta_{i,j}, \quad j\le N_n,
\end{equation}
where $\sigma_Z>0$ and $\sigma_W>0$.
This means that individual $i$ generates noisy data for any cell $A_{h,j}$ with $j\le N_n$. The following result studies the local differential privacy of this mechanism in the case that $N_n=\infty$, but it is a straightforward consequence of this that the mechanism satisfies the same bound when $N_n <\infty$.

\begin{proposition}
\label{Prop:BoundedLDP}
Consider the privacy mechanism defined in~\eqref{Eq:Mech1} and~\eqref{Eq:Mech2} when $\varepsilon_{1,1}$ and $\zeta_{1,1}$ have unit-variance Laplace distribution with probability density $x \mapsto \exp(-\sqrt{2}|x|) /\sqrt{2}$. Writing $q_{W,Z | X,Y}(w,z | x,y)$ for the probability density function of $( (W_{1,j})_{j=1}^\infty, (Z_{1,j})_{j=1}^\infty )$ conditional on $X_1=x, Y_1=y$, we have
\[
	\sup_{w,z \in \mathbb{R}^\mathbb{N}} \sup_{ x,x' \in \mathbb{R}^d} \sup_{y,y' \in [-M,M]} \frac{q_{W,Z|X,Y}(w,z|x,y)}{q_{W,Z|X,Y}(w,z|x',y')} \leq \exp \Bigl( 2^{3/2}/ \sigma_W + 2^{3/2}M / \sigma_Z \Bigr).
\]
\end{proposition}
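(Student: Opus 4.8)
The plan is to exploit that, for a fixed value $X_1=x$, the feature vector lies in exactly one cell of the partition, say $A_{h,J(x)}$, so that the mean shift applied to the Laplace noise is supported on the single coordinate $j=J(x)$. Write $f(t)=\exp(-\sqrt2|t|)/\sqrt2$ for the common density. Conditionally on $X_1=x$, $Y_1=y$, the array $((W_{1,j})_{j\ge1},(Z_{1,j})_{j\ge1})$ is the pure-noise array $(\sigma_W\zeta_{1,j},\sigma_Z\epsilon_{1,j})_{j\ge1}$ translated by a sequence that vanishes off coordinate $J(x)$. Taking the law of the pure-noise array as the reference measure, the likelihood ratio in the statement becomes a product over $j$ of univariate Laplace factors in which every term with $j\notin\{J(x),J(x')\}$ equals $1$ and drops out. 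This reduces the claim to bounding a product of at most four one-dimensional Laplace density ratios, and each such factor has the form $\exp\bigl(\pm\sqrt2(|t|-|t-a|)/\sigma\bigr)$, which by the reverse triangle inequality $\bigl||t|-|t-a|\bigr|\le|a|$ is at most $\exp(\sqrt2|a|/\sigma)$.

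First I would dispose of the case $J(x)=J(x')$. Here the $W$-shifts agree (both equal $1$ in coordinate $J(x)$), so the $W$-factors cancel and only the $Z$-coordinate $z_{J(x)}$ contributes, giving a ratio $\exp\bigl(\sqrt2(|z_{J(x)}-[y']_{-M}^{M}|-|z_{J(x)}-[y]_{-M}^{M}|)/\sigma_Z\bigr)\le\exp\bigl(\sqrt2\,|[y]_{-M}^{M}-[y']_{-M}^{M}|/\sigma_Z\bigr)\le\exp(2^{3/2}M/\sigma_Z)$, since both truncated responses lie in $[-M,M]$; this is dominated by the claimed bound.

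The binding case is $J(x)\neq J(x')$, where two coordinates survive. At $j=J(x)$ the numerator carries shifts $1$ (for $W$) and $[y]_{-M}^{M}$ (for $Z$) while the denominator has no shift; at $j=J(x')$ the roles are reversed with $Z$-shift $[y']_{-M}^{M}$. Applying the elementary bound above to each of the four factors, with $|a|\le1$ for the two $W$-factors and $|a|\le M$ for the two $Z$-factors, and multiplying, gives $\exp\bigl(2\sqrt2/\sigma_W+2\sqrt2 M/\sigma_Z\bigr)=\exp\bigl(2^{3/2}/\sigma_W+2^{3/2}M/\sigma_Z\bigr)$, which is exactly the asserted bound.

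The case analysis is routine; the only step needing a little care is the reduction in the first paragraph, i.e.\ checking that the infinite-dimensional conditional density and its ratio are legitimate objects. The clean route is to observe that the shift sequence has a single nonzero entry, so the Radon--Nikodym derivative of the conditional law with respect to the pure-noise product measure is a bona fide finite product of Laplace ratios; equivalently, one first proves the bound for the truncated arrays $(W_{1,j},Z_{1,j})_{j\le N}$, where the density is an honest product over $j\le N$, notes that the bound is independent of $N$, and lets $N\to\infty$. The extension of Proposition~\ref{Prop:BoundedLDP} to $N_n<\infty$ referred to in the text is then immediate, since restricting to $j\le N_n$ can only remove factors from the product and hence cannot increase the supremum of the ratio.
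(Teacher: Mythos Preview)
Your proof is correct and follows essentially the same route as the paper: both arguments reduce the infinite product to the at most two coordinates $J(x),J(x')$ where the mean shifts differ, split into the cases $J(x)=J(x')$ and $J(x)\neq J(x')$, and bound the surviving Laplace ratios via the reverse triangle inequality $\bigl||t|-|t-a|\bigr|\le|a|$. Your additional care in justifying the infinite-dimensional density (via the single-coordinate shift and the finite-$N$ truncation) is a point the paper leaves implicit, but otherwise the arguments coincide.
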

Given $\alpha>0$, we can therefore ensure that our privacy mechanism is $\alpha$-LDP by choosing $M,\sigma_W,\sigma_Z$ such that $2^{3/2}(1/\sigma_W + M/\sigma_Z) \leq \alpha$. This is satisfied if, for example, we take $\sigma_W^2 = 32/\alpha^2$ and $\sigma_Z^2 = 32 M^2 / \alpha^2$. For such $\sigma_W,\sigma_Z$, the data set
\begin{align*}
\tilde D_{n,h}=\{( j, \tilde \nu_n(A_{h,j}),\tilde \mu_n(A_{h,j})) : j=1,\ldots,N_n \}
\end{align*}
may be published without violating the $\alpha$-LDP constraint,
where
\begin{equation}
\label{Eq:nummum}
	\tilde{\nu}_n(A_{h,j}) = \frac{1}{n} \sum_{i=1}^n Z_{i,j}\IND_{\{j\le N_n\}} \quad \text{and} \quad \tilde{\mu}_n(A_{h,j}) = \frac{1}{n} \sum_{i=1}^n W_{i,j}\IND_{\{j\le N_n\}}.
\end{equation}

Now that we have introduced our privacy mechanism we may define our estimator of $m$ based on $\tilde{D}_{n,h}$. For $c_n>0$ we define
\begin{align*}
	\tilde{m}_n(x) = \frac{\tilde{\nu}_n(A_{h_n,j})}{\tilde{\mu}_n(A_{h_n,j})}\IND_{\{\tilde\mu_n(A_{h_n,j})\ge c_nh_n^d\}}\IND_{\{j\le N_n\}} \quad \text{when } x \in A_{h_n,j}.
\end{align*}
This is a novel estimator that extends the classical partitioning regression estimate to the LDP setting. In non-private settings such estimators may be seen as averaging the value of the response over each element of the partition, but here we are unable to retain this interpretation as we cannot know exactly how many data points fall in each cell. This lack of knowledge is particularly problematic in low-density regions, where the estimate of $\mu$ is necessarily especially noisy, and where our estimator must be carefully defined. A crucial component of the estimate is the way it detects the empty cells and truncates. If $X$ has a density, then $\mu(A_{h_n,j})$ is of order $h_n^d$. Furthermore, on the support of an arbitrary $\mu$, $\mu(A_{h_n,j})/h_n^d$ is bounded away from zero. More precisely, if  $A_n(x)$ stands for the cube $A_{h_n,j}$ containing $x$, then
\begin{align*}
\liminf_n \mu(A_n(x))/h_n^d>0
\end{align*}
for $\mu$-almost all $x$, see Lemma 24.10 in  \cite{gyorfi2006distribution}. Thus, for arbitrary $\mu$, the order of $\mu(A_{h_n,j})$ is at least $h_n^d$. Therefore, $c_n \rightarrow 0$ implies that
$\mu(A_{h_n,j})>c_nh_n^d$, for large enough $n$.

When $\sigma_W=\sigma_Z=0$ and $c_n=\log n /(n h_n^d)$ then we recover the non-private partitioning estimator, which has access to the raw data, discussed above.

Our first main new result extends Theorem~\ref{bcons} to the private setting where $\sigma_W,\sigma_Z >0$ are fixed, and establishes the strong universal consistency of $\tilde{m}_n$.
\begin{theorem}
\label{Thm:UniversalConsistency}
If $S_n\uparrow \Rd$, $c_n \rightarrow 0$, $h_n \rightarrow 0$, $M_n\rightarrow \infty$ and
\begin{align}
\label{2d}
\frac{(\log n)^3  }{n c_n^2 h_n^{2d}} \rightarrow 0
\end{align}
then
\begin{align}
\label{str}
\lim_{n\to \infty} \int \{ m(x) - \tilde m_n(x) \}^{2} \mu(dx)=0\quad \mbox{a.s.,}
\end{align}
for any distribution of $(X,Y)$ with $\EXP Y^2<\infty$.
\end{theorem}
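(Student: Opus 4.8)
The plan is to centre the analysis on the truncated \emph{population cell-average} $m_n^\ast$, defined to be constant on each cell with value $m_{n,j}^\ast:=\EXP\bigl[[Y]_{-M_n}^{M_n}\IND_{\{X\in A_{h_n,j}\}}\bigr]/\mu(A_{h_n,j})$ on $A_{h_n,j}$ for $j\le N_n$ (read as $0$ when $\mu(A_{h_n,j})=0$) and value $0$ on the remaining cells, and to split the loss by the triangle inequality in $L_2(\mu)$,
\[
\|m-\tilde m_n\|_{L_2(\mu)}\le\|m-m_n^\ast\|_{L_2(\mu)}+\|m_n^\ast-\tilde m_n\|_{L_2(\mu)}.
\]
The first, \emph{bias}, term is deterministic. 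Writing $m^{(M_n)}(x)=\EXP[[Y]_{-M_n}^{M_n}\mid X=x]$, on $\bigcup_{j\le N_n}A_{h_n,j}$ the function $m_n^\ast$ is the $L_2(\mu)$-projection of $m^{(M_n)}$ onto the partition $\sigma$-field, so $\|m_n^\ast-m\|_{L_2(\mu)}$ is at most $\|m^{(M_n)}-m\|_{L_2(\mu)}$ (conditional expectation is a contraction) plus $\|\EXP_\mu[m\mid\mathcal{P}_{h_n}]-m\|_{L_2(\mu)}$ plus $\bigl(\int_{S_n^c}m^2\,d\mu\bigr)^{1/2}$, and these vanish respectively because $M_n\to\infty$ with $\EXP[Y^2]<\infty$ (dominated convergence, using $\EXP[m(X)^2]\le\EXP[Y^2]$), because $h_n\to0$ (convergence of cell-averages as $h_n\to0$; see~\cite{gyorfi2006distribution}), and because $S_n\uparrow\Rd$. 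So the substance of the theorem is the almost sure convergence to $0$ of the \emph{variance} term $\|m_n^\ast-\tilde m_n\|_{L_2(\mu)}^2=\sum_{j\le N_n}\mu(A_{h_n,j})(m_{n,j}^\ast-\tilde m_{n,j})^2$, where $\tilde m_{n,j}$ is the value of $\tilde m_n$ on $A_{h_n,j}$.

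On $\{\tilde\mu_n(A_{h_n,j})\ge c_nh_n^d\}$, where the ratio defining $\tilde m_n$ is active, I would use the identity
\[
\tilde m_{n,j}-m_{n,j}^\ast=\frac{\tilde\nu_n(A_{h_n,j})-\nu_{n,j}^\ast}{\tilde\mu_n(A_{h_n,j})}-m_{n,j}^\ast\,\frac{\tilde\mu_n(A_{h_n,j})-\mu(A_{h_n,j})}{\tilde\mu_n(A_{h_n,j})},\qquad \nu_{n,j}^\ast:=\EXP\bigl[[Y]_{-M_n}^{M_n}\IND_{\{X\in A_{h_n,j}\}}\bigr],
\]
together with $\tilde\mu_n(A_{h_n,j})\ge c_nh_n^d$ there and $\tilde m_{n,j}=0$ off that event, to bound $\mu(A_{h_n,j})(m_{n,j}^\ast-\tilde m_{n,j})^2$ by a constant times the sum of three nonnegative pieces: a numerator-noise piece $\mu(A_{h_n,j})(\tilde\nu_n(A_{h_n,j})-\nu_{n,j}^\ast)^2/(c_nh_n^d)^2$, a denominator-noise piece $\bigl((\nu_{n,j}^\ast)^2/\mu(A_{h_n,j})\bigr)(\tilde\mu_n(A_{h_n,j})-\mu(A_{h_n,j}))^2/(c_nh_n^d)^2$, and a missed-cell piece $\bigl((\nu_{n,j}^\ast)^2/\mu(A_{h_n,j})\bigr)\IND_{\{\tilde\mu_n(A_{h_n,j})<c_nh_n^d\}}$. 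The structural facts that keep the resulting sums under control are: $\sum_j(\nu_{n,j}^\ast)^2/\mu(A_{h_n,j})\le\sum_j\EXP[Y^2\IND_{\{X\in A_{h_n,j}\}}]=\EXP[Y^2]$ by Cauchy--Schwarz and $([Y]_{-M_n}^{M_n})^2\le Y^2$ (so $M_n$ never enters this bound); the centred unit-variance Laplace noises give $\EXP[(\tilde\nu_n(A_{h_n,j})-\nu_n(A_{h_n,j}))^2\mid\mathcal{D}_n]=\sigma_Z^2/n$ and $\EXP[(\tilde\mu_n(A_{h_n,j})-\mu_n(A_{h_n,j}))^2\mid\mathcal{D}_n]=\sigma_W^2/n$, with $\sigma_W,\sigma_Z$ fixed; the arrays $\{\epsilon_{i,j}\}$ and $\{\zeta_{i,j}\}$ are conditionally independent given $\mathcal{D}_n$; and, by Lemma~24.10 in~\cite{gyorfi2006distribution} together with $c_n\to0$, for $\mu$-almost every $x$ the cell $A_n(x)$ eventually satisfies $\mu(A_n(x))>2c_nh_n^d$, so that by dominated convergence $\sum_{j\,:\,0<\mu(A_{h_n,j})\le 2c_nh_n^d}(\nu_{n,j}^\ast)^2/\mu(A_{h_n,j})\to0$.

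Split the cells into the ``light'' ones, $\mu(A_{h_n,j})\le c_nh_n^d/2$, and the at most $2/(c_nh_n^d)$ ``heavy'' ones. For a light cell the events $\{\tilde\mu_n(A_{h_n,j})\ge c_nh_n^d\}$ and $\{\mu_n(A_{h_n,j})>\tfrac34 c_nh_n^d\}$ have probability at most $e^{-cn(c_nh_n^d)^2}$, \emph{uniformly} over such cells (a Chernoff bound on the binomial cell counts, and a sub-exponential tail bound on $\tfrac1n\sum_i\zeta_{i,j}$); combining this with the conditional independence of the two noise arrays and with the summable series above, the expected total contribution of the light cells is at most a constant (depending on $\EXP[Y^2],\sigma_W,\sigma_Z$) times $(c_nh_n^d)^{-2}e^{-cn(c_nh_n^d)^2}$. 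Since~\eqref{2d} forces $c_nh_n^d\gg(\log n)^{3/2}n^{-1/2}$, hence $\log(1/(c_nh_n^d))=O(\log n)=o(nc_n^2h_n^{2d})$, this bound is summable in $n$, and Borel--Cantelli disposes of the light cells (apart from the deterministic series above). For the heavy cells, the denominator-noise piece and the Laplace part of the numerator-noise piece are handled by a union bound over these $O(n^{1/2}(\log n)^{-3/2})$ cells --- costing only $O(\log n)$, again absorbed by~\eqref{2d} --- which gives $\max_{j\text{ heavy}}|\tilde\mu_n(A_{h_n,j})-\mu(A_{h_n,j})|=o(c_nh_n^d)$ and $\max_{j\text{ heavy}}|\tfrac1n\sum_i\epsilon_{i,j}|=o(c_nh_n^d)$ almost surely, and these together with $\sum_j(\nu_{n,j}^\ast)^2/\mu(A_{h_n,j})\le\EXP[Y^2]$ make those pieces vanish. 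The remaining, purely sampling, contribution of the heavy cells has expectation of order $1/(nc_n^2h_n^{2d})=o(1)$ but not summable, and here I would follow the proof of Theorem~\ref{bcons}: pass to a subsequence along which it is summable and interpolate, using binomial and bounded-increment concentration for the cell counts and the truncated empirical averages (again via $([Y]_{-M_n}^{M_n})^2\le Y^2$ rather than crude $M_n$-bounds).

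I expect the principal obstacle to be the \emph{random} data-truncation indicator $\IND_{\{\tilde\mu_n(A_{h_n,j})\ge c_nh_n^d\}}$. In Theorem~\ref{bcons} the analogous $\IND_{\{\mu_n(A_{h_n,j})\ge\log n/n\}}$ is a function of the data alone, whereas here it depends on the privacy noise and, moreover, is correlated with the noisy numerator $\tilde\nu_n(A_{h_n,j})$; this is why its effect on low-density cells can only be controlled through the conditional-independence structure of the mechanism, and why the low- and high-density cells must be treated by structurally different arguments. The scheme also rests on $\sigma_W,\sigma_Z$ being fixed --- so that the extra per-cell noise variances $\sigma_W^2/n,\sigma_Z^2/n$ are of the same order as the intrinsic sampling variance --- and on the strength of~\eqref{2d}, which places the threshold $c_nh_n^d$ comfortably above the $n^{-1/2}$ noise scale, with a $(\log n)^{3/2}$ margin for the Borel--Cantelli steps.
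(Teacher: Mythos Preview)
Your overall architecture—centre on the population cell average $m_n^\ast$, split into bias and variance, then treat light and heavy cells separately—is plausible, but there is a genuine gap in the light-cell step. For a light cell you want to show that the \emph{sampling} part of the numerator, $\mu(A_{h_n,j})(\nu_n(A_{h_n,j})-\nu^\ast_{n,j})^2/(c_nh_n^d)^2$ restricted to the event $\{\tilde\mu_n(A_{h_n,j})\ge c_nh_n^d\}$, has expectation bounded by a constant (depending only on $\EXP Y^2,\sigma_W,\sigma_Z$) times $(c_nh_n^d)^{-2}e^{-cn(c_nh_n^d)^2}$. The part of the rare event coming from the Laplace denominator noise $\tau_n$ is indeed independent of $\nu_n$, so conditional independence works there. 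But the part coming from $\{\mu_n>\tfrac34 c_nh_n^d\}$ is \emph{not} independent of $\nu_n$ (both depend on the same $X_i$'s), so you are forced into a Cauchy--Schwarz or higher-moment argument, and $\EXP[(\nu_n-\nu^\ast_{n,j})^4]$ inevitably picks up a factor of $M_n^2$ via $([Y]_{-M_n}^{M_n})^4\le M_n^2Y^2$. Since the theorem places no growth restriction on $M_n$, the resulting bound $M_n^2(c_nh_n^d)^{-2}e^{-cn(c_nh_n^d)^2}$ need not be summable. Your remark that ``$M_n$ never enters'' is correct for the second-moment series $\sum_j(\nu^\ast_{n,j})^2/\mu(A_{h_n,j})$, but not for this step. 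The ``subsequence and interpolate'' argument for the heavy-cell sampling piece is also delicate here because the partition itself changes with $n$, so bounded-increment interpolation between consecutive $n$ does not apply directly.

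The paper avoids both difficulties by a different decomposition: instead of centring at the population cell average $m_n^\ast$, it writes $\tilde m_n=m'_n+m^\ast_n$ where $m'_n$ isolates the Laplace numerator noise and $m^\ast_n$ has the \emph{same} empirical numerator $\nu_n$ as the classical partitioning estimate $m_n$. Then $\int(m-m_n)^2\,d\mu\to0$ a.s.\ is the black box of Theorem~\ref{bcons} (slightly modified for the truncated $\nu_n$), and the remaining comparison $\int(m_n-m^\ast_n)^2\,d\mu$ involves only the denominator change $\mu_n\leftrightarrow\tilde\mu_n$ and the threshold change; the common numerator factors out as $m_n(x)^2$, which is controlled via $\int m_n^2\,d\mu\le\int m^2\,d\mu+o(1)$, independently of $M_n$. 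For the Laplace pieces, rather than a light/heavy split with union bounds, the paper uses a single $2q$-th moment inequality for sums of i.i.d.\ variables (their Lemma~B) to obtain, for well-chosen $q$ of order $(nc_n^2h_n^{2d})^{1/3}$, a probability bound of the form $e^{-c(nc_n^2h_n^{2d})^{1/3}}$, directly summable under~\eqref{2d}. If you want to salvage your route, the cleanest fix is to insert the classical $m_n$ as an intermediate comparison point so that the sampling fluctuation is absorbed into Theorem~\ref{bcons} rather than handled by hand.
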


The proof of Theorem~\ref{Thm:UniversalConsistency} shows that replacement of (\ref{2d}) by $n c_n^2 h_n^{2d} \rightarrow \infty$
yields the weak universal consistency of $\tilde{m}_n$.

In problems of differential privacy one often wants to work in a high-privacy regime, where we have $\alpha \rightarrow 0$ as $n \rightarrow \infty$. With our privacy mechanism, this requires that $\min(\sigma_W,\sigma_Z/M) \rightarrow \infty$, and so we remark that Theorem~\ref{Thm:UniversalConsistency} can easily be extended to the setting in which the variances $\sigma^2_Z$ and $\sigma^2_W$ may depend on the sample size $n$.
Replacing the condition  (\ref{2d}) with
\begin{align*}
\frac{(\log n)^3 (1+\sigma^2_{Z,n} +\sigma^2_{W,n}) }{n c_n^2 h_n^{2d}}\rightarrow 0,
\end{align*}
a straightforward extension of the proof of Theorem~\ref{Thm:UniversalConsistency} implies the strong universal consistency.

Comparing with Theorem~\ref{bcons}, we see that that the usual condition $n h_n^d \rightarrow \infty$ has been replaced by $nh_n^{2d} \rightarrow \infty$.
Heuristically, this difference can be understood by considering the properties of $\tilde{\nu}_n(A_{h_n,j})$. Writing $\nu(A):= \int_A m(x) \mu(dx)$, we have
\[
	\mathbb{E} \{ \tilde{\nu}_n(A_{h_n,j}) \} = \nu(A_{h_n,j}),
\]
which is the same as in the non-private case. However, we see a difference when we consider that
\begin{equation}
\label{Eq:HomoscedasticVariance}
	n \Var \{\tilde{\nu}_n(A_{h_n,j})\} = n \Var \{\nu_n(A_{h_n,j})\} + \sigma_Z^2.
\end{equation}
In the non-private case, the only contribution is from the first term, which can be seen to typically be $O(h_n^d)$. However, in the private case we will usually take $\sigma_Z \propto 1/\alpha$ to be large, and hence the variance in~\eqref{Eq:HomoscedasticVariance} is dominated by the second term, which does not vanish with $h_n$. This occurs in other LDP problems \citep[e.g.][]{berrett2020locally}; the privacy constraint introduces an unavoidable homoscedastic term into the variance of our estimator, which results in very different behaviour, including a curse-of-dimensionality that is often more severe than in non-private problems.

Assume that the regression function $m$ is Lipschitz continuous, $Y$ is bounded and $X$ has a density, which is bounded away from zero. Then following the line of the proof of Theorem~\ref{Thm:UniversalConsistency} we can bound the rate of convergence:
\begin{align*}
\EXP\int \{m(x)-\tilde{m}_n(x)\}^2\mu(dx)
&=
O\left(\frac{ 1 }{n c_n^2h_n^{2d}}\right)+O(h_n^2).
\end{align*}
For the choices
\[
{ h}_n=
c'
n^{-1/(2(d+1))}
\]
and
\[
c_n=
1/\sqrt{\log n},
\]
this upper bound results in
\begin{align*}
\EXP \int \{ m(x) - \tilde{m}_n(x) \}^{2} \mu(dx)
&=
O\left(\frac{\log n}{n^{1 / (d+1)}}\right).
\end{align*}
We conjecture that
\begin{align*}
O\left(\frac{1}{n^{1 / (d+1)}}\right)
\end{align*}
is the minimax lower bound over all $\alpha$-LDP privacy mechanisms for Lipschitz continuous regression function, which would imply that our estimate is minimax optimal up to a factor of $\log n$.
Furthermore, the lower bound on the density appears to be crucial; we speculate that if the density is not bounded away from zero, then the rate of convergence of any estimate can be arbitrarily slow.

\section{Consequences in classification}
\label{Sec:Classification}

For the setup of binary classification, let the feature vector $X$ take values in $\Rd$, and let its label
$Y$ be $\pm 1$ valued. If $g$ is an arbitrary decision function
then its error probability is denoted by
\[
L(g)=\PROB\{g(X)\ne Y\}.
\]
The Bayes decision rule $g^*$, given by 
\[
g^*(x) = sign\,  m(x),
\]
where $sign(z)=1$ for $z>0$ and $sign(z)=-1$ for $z \leq 0$, minimises the error probability. Let
\[
L^*=\PROB\{g^*(X)\ne Y\}
\]
denotes its error probability.

For privatised data, the partitioning classification rule is defined by
\begin{align*}
g_n(x) = sign\,\left(  \tilde{\nu}_n(A_{h_n,j})\right) \quad \text{when } x \in A_{h_n,j}.
\end{align*}
Note that this rule does not use the data $\{W_{i,j}\}$. Under the conditions
\begin{align*}
\lim_{n\to\infty}{ h}_n=0\, \mbox{ and }\, \lim_{n\to\infty}n{ h}_n^{2d} =\infty,
\end{align*}
\citet{berrett2019classification} showed that  the partitioning classification rule $g_n$ is weakly universally consistent, i.e.,
\begin{align*}
\lim_{n\to\infty} \EXP\{L(g_n)\}=L^*
\end{align*}
for any distribution of $(X,Y)$. Our work here allows us to strengthen this result to the following theorem on strong universal consistency:
\begin{theorem}
\label{patt}
If
\begin{align*}
\lim_{n\to\infty}{ h}_n=0\, \mbox{ and }\, \lim_{n\to\infty}n{ h}_n^{2d}/\log n =\infty,
\end{align*}
then the classification rule $ g_{n}$ is strongly universally consistent, i.e.,
\begin{align*}
\lim_{n\to \infty} L(g_n)=L^*
\end{align*}
a.s. for any distribution of $(X,Y)$.
\end{theorem}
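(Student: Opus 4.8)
The plan is to combine the classical plug-in reduction for classification with a direct analysis of the privatised cell statistics $\tilde\nu_n$, using that $g_n$ depends on the data only through the $Z_{i,j}$ (never the $W_{i,j}$) and that $Y\in\{-1,1\}$ forces $|m|\le 1$. By the standard identity (Theorem~2.2 of \cite{devroye2013probabilistic}),
\[
	L(g_n)-L^* \;=\; \int_{\{g_n(x)\ne g^*(x)\}}|m(x)|\,\mu(dx),
\]
so it suffices to show $\int |m(x)|\,\IND_{\{g_n(x)\ne g^*(x)\}}\,\mu(dx)\to 0$ a.s. Write $\nu(A):=\int_A m\,d\mu$ and let $\bar m_n(x):=\nu(A_n(x))/\mu(A_n(x))$ be the local $\mu$-average of $m$ over the cell $A_n(x)$ containing $x$. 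For $\mu$-almost every $x$ with $m(x)\ne 0$, if $\mathrm{sign}(\tilde\nu_n(A_n(x)))\ne\mathrm{sign}(m(x))$ then either $|\bar m_n(x)-m(x)|\ge |m(x)|/2$, or else $\bar m_n(x)$ and $m(x)$ have the same sign, whence $|\tilde\nu_n(A_n(x))-\nu(A_n(x))|\ge|\nu(A_n(x))|\ge(|m(x)|/2)\,\mu(A_n(x))$. Thus the integral is at most $B_n+V_n$, with $B_n=\int|m|\,\IND_{\{|\bar m_n-m|\ge|m|/2\}}\,d\mu$ and $V_n=\int|m|\,\IND_{\{|\tilde\nu_n(A_n(\cdot))-\nu(A_n(\cdot))|\ge(|m|/2)\mu(A_n(\cdot))\}}\,d\mu$.

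The term $B_n$ is non-random, and on its indicator set $|m|\le 2|\bar m_n-m|$, so $B_n\le 2\int|\bar m_n-m|\,d\mu\to0$ because $\bar m_n$ is the $\mu$-conditional expectation of $m$ given the cubic partition and $h_n\to0$ (the familiar $L_1$ partition-approximation argument: approximate $m$ in $L_1(\mu)$ by a continuous function and use uniform continuity; cf.\ Chapter~23 of \cite{gyorfi2006distribution}). For $V_n$, fix $t>0$; since $|m|\le1$, $V_n\le t+\int_{\{|m|\ge t\}}\IND_{\{|\tilde\nu_n(A_n(x))-\nu(A_n(x))|\ge(t/2)\mu(A_n(x))\}}\,\mu(dx)$. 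To excise the low-density region, for an integer $N\ge1$ put $H_N:=\{x:|m(x)|\ge t\text{ and }\mu(A_n(x))\ge h_n^d/N\text{ for all }n\ge N\}$; by Lemma~24.10 of \cite{gyorfi2006distribution} the sets $H_N$ increase to a set of full $\mu$-measure inside $\{|m|\ge t\}$, so fix $N$ with $\mu(\{|m|\ge t\}\setminus H_N)<\varepsilon$. On $H_N$, for $n\ge N$, the event $\{|\tilde\nu_n(A_n(x))-\nu(A_n(x))|\ge(t/2)\mu(A_n(x))\}$ forces $|\tilde\nu_n(A_n(x))-\nu(A_n(x))|\ge th_n^d/(2N)$; since the cells partition $\Rd$ and every cell meeting $H_N$ has $\mu$-mass at least $h_n^d/N$, there are at most $N/h_n^d$ such cells, indexed say by $J_n$, and $\int_{H_N}(\cdots)\,d\mu\le\sum_{j\in J_n}\mu(A_{h_n,j})\,\IND_{\{|\tilde\nu_n(A_{h_n,j})-\nu(A_{h_n,j})|\ge th_n^d/(2N)\}}$.

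The crux is to show this last sum tends to $0$ a.s. Decompose $\tilde\nu_n(A_{h_n,j})-\nu(A_{h_n,j})=(\nu_n(A_{h_n,j})-\nu(A_{h_n,j}))+\sigma_Z\bar\epsilon_{n,j}$, where $\nu_n(A):=\frac1n\sum_i Y_i\IND_{\{X_i\in A\}}$ (so $\EXP[\nu_n(A)]=\nu(A)$) and $\bar\epsilon_{n,j}:=\frac1n\sum_i\epsilon_{i,j}$; then the indicator is dominated by $\IND_{\{|\nu_n(A_{h_n,j})-\nu(A_{h_n,j})|\ge th_n^d/(4N)\}}+\IND_{\{|\bar\epsilon_{n,j}|\ge th_n^d/(4N\sigma_Z)\}}$. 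Hoeffding's inequality (the summands of $\nu_n$ lie in $[-1,1]$) bounds the probability of the first event by $2e^{-c_1 nh_n^{2d}}$, and, since the threshold $th_n^d/(4N\sigma_Z)\to0$, a Bernstein bound for an average of i.i.d.\ centred unit-variance Laplace variables bounds that of the second by $2e^{-c_2 nh_n^{2d}}$, with $c_1,c_2>0$ depending only on $t,N,\sigma_Z$. As the masses of the at most $N/h_n^d$ relevant cells sum to at most $1$, the expectation of the sum is $O(e^{-c nh_n^{2d}})$, which is $O(n^{-2})$ because $nh_n^{2d}/\log n\to\infty$; hence the expectations are summable and Markov's inequality with the Borel--Cantelli lemma yield the a.s.\ convergence (taking expectations instead recovers the weak-consistency statement of \cite{berrett2019classification}). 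Combining the three parts, $\limsup_n(L(g_n)-L^*)\le t+\varepsilon$ a.s., and letting $t,\varepsilon\downarrow0$ finishes the proof; if the cubes are confined to a sphere $S_n$, one additionally uses $S_n\uparrow\Rd$ so that $\mu(S_n^c)\to0$ absorbs the cells with $x$ outside $S_n$.

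I expect the last step to be the main obstacle. The privacy noise $\epsilon_{i,j}$ is independent across cells but has unbounded support, so the bounded-differences (McDiarmid) argument that would be natural for the non-private analogue is unavailable, and one must instead route the almost-sure control through the cellwise exponential tail bounds above, with $nh_n^{2d}/\log n\to\infty$ being exactly what makes those bounds summable for Borel--Cantelli. It is worth noting that, because $g_n$ never uses the denominators $\tilde\mu_n$, no control of $\tilde\mu_n$ near zero is required; this is precisely why only $nh_n^{2d}/\log n\to\infty$ is needed here, in contrast to condition~\eqref{2d} of Theorem~\ref{Thm:UniversalConsistency}.
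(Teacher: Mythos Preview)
Your argument is correct, and it follows a genuinely different route from the paper's proof. The paper also starts from the plug-in identity $L(g_n)-L^*=\int_{\{g_n\ne g^*\}}|m|\,d\mu$, but then passes through the truncated $L_1$ bound $L(g_n)-L^*\le\int[\,|m-\bar m_n|\,]_0^1\,d\mu$ with $\bar m_n(x)=\tilde\nu_n(A_n(x))/\mu(A_n(x))$, and decomposes via the intermediate estimator $m_n(x)=\nu_n(A_n(x))/\mu(A_n(x))$: the piece $\int|m-m_n|\,d\mu$ is handled by citing the non-private strong $L_1$ consistency result (Theorem~23.1 in \citet{gyorfi2006distribution}) as a black box, while the remaining piece $\int[\,|m_n-\bar m_n|\,]_0^1\,d\mu$ is pure Laplace noise. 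For the latter the paper restricts to cells meeting a fixed sphere $S$ (of which there are $O(h_n^{-d})$), uses the algebraic fact $[\,|(\nu_n-\tilde\nu_n)/\mu(A)|\,]_0^1\,\mu(A)\le[\,|\nu_n-\tilde\nu_n|\,]_0^1$, then splits $[\,|\cdot|\,]_0^1\le \varepsilon h_n^d+\IND_{\{|\cdot|\ge\varepsilon h_n^d\}}$ and applies Lemma~\ref{A} and Borel--Cantelli.

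By contrast, you separate a deterministic approximation term $B_n$ from a purely stochastic term $V_n$ via a margin argument, and then handle both the sampling fluctuation $\nu_n-\nu$ (Hoeffding) and the Laplace noise together inside $V_n$. This makes your proof more self-contained---you do not invoke Theorem~23.1---but the price is that your threshold in $V_n$ is $(t/2)\mu(A_n(x))$, which forces you to excise the low-density region via Lemma~24.10 and the sets $H_N$; the paper's truncation trick sidesteps this entirely and only needs a compactness argument with a sphere. Both approaches count $O(h_n^{-d})$ relevant cells and close with the same Markov/Borel--Cantelli step driven by $nh_n^{2d}/\log n\to\infty$, and your remark that $g_n$ never touches $\tilde\mu_n$---hence condition~\eqref{2d} is unnecessary here---is exactly right.
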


The rates of convergence of the classification rule $g_n$, over classes of data-generating mechanisms satisying H\"older continuity and a strong density assumption, were established in \citet{berrett2019classification}, and were moreover shown to match a minimax lower bound. In future work we will aim to establish rates of convergence for the regression problem.

\section{Proofs and auxiliary results}
\label{Sec:Proofs}

\begin{proof}[Proof of Proposition~\ref{Prop:BoundedLDP}]
Fix any $w,z \in \mathbb{R}^\mathbb{N}, x,x' \in \mathbb{R}^d, y,y' \in [-M,M]$. We have
\begin{align*}
	& \frac{f_{W,Z|X,Y}(w,z|x,y)}{ f_{W,Z|X,Y}(w,z|x',y')} \\
	& = \exp \Bigl( (\sqrt{2}/\sigma_W) \sum_{j=1}^\infty (|w_j- \mathbbm{1}_{\{ x' \in A_{h,j}\}}| - |w_j- \mathbbm{1}_{\{ x \in A_{h,j}\}}| ) \\
	& \hspace{50pt} + (\sqrt{2}/\sigma_Z) \sum_{j=1}^\infty ( |z_j - y' \mathbbm{1}_{\{x' \in A_{h,j}\}} | - |z_j - y \mathbbm{1}_{\{x \in A_{h,j}\}} |) \Bigr).
\end{align*}
Now, if there exists $j \in \mathbb{N}$ such that $x,x' \in A_{h,j}$, we have
\begin{align*}
	\sum_{j'=1}^\infty (|w_{j'}- \mathbbm{1}_{\{ x' \in A_{h,j'}\}}| - |w_{j'}- \mathbbm{1}_{\{ x \in A_{h,j'}\}}| ) &= 0 \\
	\sum_{j'=1}^\infty ( |z_{j'} - y' \mathbbm{1}_{\{x' \in A_{h,j'}\}} | - |z_{j'} - y \mathbbm{1}_{\{x \in A_{h,j'}\}} |) &= |z_j - y'| - |z_j - y| \leq 2M.
\end{align*}
On the other hand, if $x \in A_{h,j}$ and $x' \in A_{h,j'}$ with $j \neq j'$, then we have
\begin{align*}
&\sum_{j''=1}^\infty (|w_{j''}- \mathbbm{1}_{\{ x' \in A_{h,j''}\}}| - |w_{j''}- \mathbbm{1}_{\{ x \in A_{h,j''}\}}| )\\
&\quad = |w_j| - |w_j -1| + |w_{j'}-1| - |w_{j'}| \leq 2 ,\\
&\sum_{j''=1}^\infty (|z_{j''}- y'\mathbbm{1}_{\{ x' \in A_{h,j''}\}}| - |z_{j''}- y \mathbbm{1}_{\{ x \in A_{h,j''}\}}| )\\
&\quad = |z_j| - |z_j - y| + |z_{j'} - y'| - |z_{j'}| \leq 2M.
\end{align*}
It therefore follows that
\[
	\frac{f_{W,Z|X,Y}(w,z|x,y)}{ f_{W,Z|X,Y}(w,z|x',y')} \leq \exp \Bigl( 2^{3/2}/ \sigma_W + 2^{3/2}M / \sigma_Z \Bigr),
\]
as required.
\end{proof}

The proof of Theorem~\ref{Thm:UniversalConsistency} uses two lemmas.

\begin{lemma}
\label{A}
For $0<\varepsilon<2$ and $\zeta_1,\ldots,\zeta_n$ i.i.d. with mean-zero, unit-variance Laplace distribution, one has
\begin{align*}
\PROB\left\{\left|\frac 1n \sum_{i=1}^n\zeta_i\right|\ge \varepsilon \right\}
&\le
2e^{-n\varepsilon^2/4}.
\end{align*}
\end{lemma}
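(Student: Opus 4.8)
The plan is to establish this concentration inequality by a standard Chernoff/Cram\'er argument, exploiting the explicit form of the Laplace moment generating function. First I would recall that if $\zeta$ has the mean-zero, unit-variance Laplace density $x \mapsto \exp(-\sqrt 2 |x|)/\sqrt 2$, then its moment generating function is $\EXP[e^{t\zeta}] = \frac{1}{1 - t^2/2}$ for $|t| < \sqrt 2$. By symmetry it suffices to bound $\PROB\{\frac 1n \sum_{i=1}^n \zeta_i \ge \varepsilon\}$ and then double the bound. Applying Markov's inequality to $e^{t \sum_i \zeta_i}$ for $t \in (0,\sqrt 2)$ and using independence gives
\[
\PROB\Bigl\{ \tfrac 1n \textstyle\sum_{i=1}^n \zeta_i \ge \varepsilon \Bigr\} \le \expo{-t n \varepsilon} \Bigl( \tfrac{1}{1 - t^2/2} \Bigr)^n = \expo{ -n \bigl( t\varepsilon + \log(1 - t^2/2) \bigr) }.
\]

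Next I would choose a convenient (not necessarily optimal) value of $t$ to make the exponent clean. A natural choice is $t = \varepsilon/\sqrt 2$ or similar; the key analytic input is the elementary inequality $-\log(1 - u) \le 2u$ valid for $0 \le u \le 1/2$, or equivalently $\log(1 - u) \ge -2u$ on that range. With $u = t^2/2$ this requires $t^2 \le 1$, i.e. $t \le 1$, which is compatible with the constraint $t < \sqrt 2$. Then the exponent is bounded below by $t\varepsilon - t^2$, and optimising the quadratic $t \mapsto t\varepsilon - t^2$ over $t$ gives $t = \varepsilon/2$, yielding $t\varepsilon - t^2 = \varepsilon^2/4$. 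One must check that this choice $t = \varepsilon/2$ is admissible: since $\varepsilon < 2$ we have $t < 1$, so both $t^2 \le 1$ (needed for the $\log$ bound) and $t < \sqrt 2$ (needed for the MGF to be finite) hold. This gives $\PROB\{\frac 1n \sum_i \zeta_i \ge \varepsilon\} \le e^{-n\varepsilon^2/4}$, and adding the symmetric tail gives the factor of $2$.

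The main (minor) obstacle is simply bookkeeping around the admissibility constraints: one needs the hypothesis $\varepsilon < 2$ precisely to guarantee that the optimising $t = \varepsilon/2$ stays inside the region where both the MGF is finite and the logarithmic estimate $\log(1-t^2/2) \ge -t^2$ is valid. There is no deep difficulty here; the argument is a textbook sub-exponential (Bernstein-type) tail bound, and the only care required is to present the elementary inequality $-\log(1-u) \le 2u$ on $[0,1/2]$ cleanly and to verify the range conditions. An alternative would be to invoke a general Bernstein inequality for sub-exponential variables, but the direct computation above is shorter and self-contained, so that is the route I would take.
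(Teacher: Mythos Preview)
Your proof is correct and is essentially identical to the paper's own argument: both apply the Chernoff bound with the explicit Laplace MGF $1/(1-t^2/2)$, use the elementary inequality $\log(1-u)\ge -2u$ on $[0,1/2]$, and take the exponent parameter equal to $\varepsilon/2$ (the paper writes this as $t=n\varepsilon/2$ after a trivial rescaling). The admissibility check $\varepsilon<2\Rightarrow t<1$ is exactly the one the paper implicitly relies on.
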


\begin{proof}
Taking $t=n\varepsilon/2$ and using the fact that $\log(1-x) \geq -2x$ for $x \in [0,1/2]$, we have
\begin{align*}
\mathbb{P} \Bigl(  n^{-1} \sum_{i=1}^n \zeta_i \geq \varepsilon \Bigr)
&\leq e^{-t\varepsilon} \mathbb{E} [ \exp( t \zeta_1 /n) ]^n \\
& = \exp \Bigl( - t\varepsilon - n \log \Bigl( 1- \frac{t^2}{2n^2} \Bigr) \Bigr)\\
& \leq \exp\Bigl( - t\varepsilon + \frac{t^2 }{n} \Bigr) \\
& = e^{-n\varepsilon^2/4}.
\end{align*}
An analogous bound holds for the lower tail of the distribution, and the result follows.
\end{proof}

\begin{lemma}
\label{B}
Let $Z=(Z_1,\ldots,Z_n)$ be a collection of i.i.d. random
variables taking values in some measurable set $A$.
Let $f:A^n\to \R$ be  a measurable, symmetric, real-valued function, such that $f(Z_1,\dots ,Z_n)$ is integrable, let $g:A^{n-1}\to \R$ be the function obtained from $f$ by dropping the first argument.
Then for any integer $q\ge 1$,
\begin{align*}
&\EXP \left[(f(Z_1,\ldots,Z_n)-\EXP f(Z_1,\ldots,Z_n))^{2q} \right]\\
&\le 2(c^*q)^qn^q\EXP \left[  \left(f(Z_1,\ldots,Z_n)-g(Z_2,\ldots,Z_n)\right)^{2q}\right]~.
\end{align*}
with a universal constant $c^*<5.1$.
\end{lemma}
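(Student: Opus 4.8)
The plan is to combine the Doob martingale decomposition of $f(Z_1,\ldots,Z_n)$ with a direct moment expansion. Write $\mathcal{F}_k=\sigma(Z_1,\ldots,Z_k)$, let $\EXP_k$ be conditional expectation given $\mathcal{F}_k$, and put $d_k:=\EXP_k f(Z_1,\ldots,Z_n)-\EXP_{k-1}f(Z_1,\ldots,Z_n)$, so that $f(Z_1,\ldots,Z_n)-\EXP f(Z_1,\ldots,Z_n)=\sum_{k=1}^n d_k$ is a sum of martingale differences. The role of $g$ is that, since $g$ is $f$ with an argument removed, the variable $h_k:=g(Z_1,\ldots,Z_{k-1},Z_{k+1},\ldots,Z_n)$ does not involve $Z_k$; hence $\EXP_k h_k=\EXP_{k-1}h_k$ and $d_k=\EXP_k\xi_k-\EXP_{k-1}\xi_k$ with $\xi_k:=f(Z_1,\ldots,Z_n)-h_k$. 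By symmetry of $f$ (and so of $g$) and the i.i.d.\ assumption, $\xi_k$ has the same distribution as $f(Z_1,\ldots,Z_n)-g(Z_2,\ldots,Z_n)$ for each $k$, so that the elementary inequality $|a-b|^{2q}\le 2^{2q-1}(|a|^{2q}+|b|^{2q})$ together with conditional Jensen gives $\EXP|d_k|^{2q}\le 2^{2q}\EXP[(f(Z_1,\ldots,Z_n)-g(Z_2,\ldots,Z_n))^{2q}]$ for every $k$.

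Next I would expand $\EXP[(\sum_{k=1}^n d_k)^{2q}]=\sum_{k_1,\ldots,k_{2q}=1}^n\EXP[d_{k_1}\cdots d_{k_{2q}}]$ and prune the sum using the martingale property: conditioning on $\mathcal{F}_{m-1}$, where $m$ is the largest of $k_1,\ldots,k_{2q}$, shows that a term vanishes unless $m$ occurs among the indices at least twice, and, iterating this, only those tuples survive in which every distinct value occurs at least twice. A surviving tuple therefore has at most $q$ distinct values, and the number of surviving tuples is at most $n^q$ times a combinatorial factor which is dominated by the tuples in which $q$ distinct values each occur exactly twice; the latter number is $(2q-1)!!\,n(n-1)\cdots(n-q+1)\le (2q-1)!!\,n^q$, of order $(cq)^q n^q$ by Stirling's formula. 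Bounding each surviving term by $\max_k\EXP|d_k|^{2q}$ via the generalised H\"older inequality and then invoking the estimate from the first paragraph produces a bound of the shape $2(c^*q)^q n^q\EXP[(f-g)^{2q}]$; the precise value $c^*<5.1$ and the leading factor $2$ emerge from collecting the constants along the way together with the lower-order-in-$n$ combinatorial terms. For $q=1$ the argument reduces to an Efron--Stein-type bound $\Var f(Z_1,\ldots,Z_n)\le n\,\EXP[(f-g)^2]$.

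The main obstacle is quantitative rather than structural: one must bound the combinatorial factor carefully enough --- in particular, control the total contribution of the surviving tuples with fewer than $q$ distinct values and larger multiplicities relative to the leading $n^q$ term, uniformly over all $n\ge1$ and all integers $q\ge1$ --- and then combine this with the factor $2^{2q}$ incurred when passing between the martingale increments $d_k$ and the ``drop one argument'' quantity $f-g$, in order to land exactly on the stated universal constant. A routine preliminary is to verify that all conditional expectations involved are well defined and integrable, which follows from the integrability of $f(Z_1,\ldots,Z_n)$ by Fubini's theorem.
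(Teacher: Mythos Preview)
Your overall plan --- Doob martingale decomposition, the identity $d_k=\EXP_k\xi_k-\EXP_{k-1}\xi_k$ with $\xi_k=f-h_k$, and the bound $\EXP|d_k|^{2q}\le 2^{2q}\EXP[(f-g)^{2q}]$ via conditional Jensen --- is sound and is indeed the standard route to Efron--Stein-type moment inequalities. But the crucial pruning step in your moment expansion has a genuine gap. For martingale differences it is \emph{not} true that $\EXP[d_{k_1}\cdots d_{k_{2q}}]=0$ unless every distinct index appears at least twice; only the \emph{largest} index must be repeated. Concretely, with $n=2$ and $q=2$ the cross term $\EXP[d_1 d_2^3]=\EXP\bigl[d_1\,\EXP_1[d_2^3]\bigr]$ need not vanish, because $\EXP_1[d_2^3]$ is $\mathcal{F}_1$-measurable but not constant. (For the Doob martingale of $f(z_1,z_2)=z_1+z_2+z_1z_2$ with centred $Z_i$ one has $d_1=Z_1$, $d_2=Z_2(1+Z_1)$, and these are not independent.) Your ``iterate'' step therefore fails, and with it the combinatorial count that produces the $(cq)^q n^q$ factor. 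The obstacle is structural, not merely quantitative.

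The fix is to replace the raw expansion by a martingale moment inequality. Burkholder's inequality gives $\EXP[(\sum_k d_k)^{2q}]\le (Cq)^q\,\EXP[(\sum_k d_k^2)^{q}]$ with a universal constant $C$, after which convexity ($(\sum_k d_k^2)^q\le n^{q-1}\sum_k d_k^{2q}$) and symmetry yield $(Cq)^q n^q\max_k\EXP|d_k|^{2q}$; combined with your bound on $\EXP|d_k|^{2q}$ this delivers the stated form. This is in fact how the result you are asked to prove is obtained in the literature: the paper does not argue from scratch but invokes Lemma~4.4 of \citet{devroye2018nearest} (a Burkholder-based Efron--Stein moment bound) and observes via Jensen that the present statement is a special case. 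So the paper's ``proof'' is a one-line citation, whereas your attempt is a direct argument; the direct argument can be made to work, but only after replacing the faulty iteration by a Burkholder-type inequality.
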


\begin{proof}
Applying Jensen's inequality, this lemma is a special case of Lemma 4.4 in \citet{devroye2018nearest}.
\end{proof}

\begin{proof}[Proof of Theorem~\ref{Thm:UniversalConsistency}]
We use the decomposition
\begin{align*}
\tilde m_n=m'_n+m^*_n,
\end{align*}
where for $x \in A_{h_n,j}$ we write
\begin{align*}
m'_n(x) = \frac{\frac{\sigma_Z}{n} \sum_{i=1}^n\epsilon_{i,j}}{ \tilde{\mu}_n(A_{h_n,j})}\IND_{\{\tilde\mu_n(A_{h_n,j})\ge c_nh_n^d\}}\IND_{\{j\le N_n\}},
\end{align*}
and
\begin{align*}
m^*_n(x) = \frac{\nu_n(A_{h_n,j})}{ \tilde{\mu}_n(A_{h_n,j})}\IND_{\{\tilde\mu_n(A_{h_n,j})\ge c_nh_n^d\}}\IND_{\{j\le N_n\}}.
\end{align*}
It suffices to show that
\begin{align}
\label{1*}
\lim_{n\to \infty} \int  m'_n(x)^{2} \mu(dx)=0\quad \mbox{a.s.,}
\end{align}
and
\begin{align}
\label{1**}
\lim_{n\to \infty} \int \{ m(x) -  m^*_n(x) \}^2 \mu(dx)=0\quad \mbox{a.s.}
\end{align}
But (\ref{conv}) implies that
\begin{align}
\label{c1}
\lim_{n\to \infty} \int \{ m(x) -  m_n(x) \}^2 \mu(dx)=0\quad \mbox{a.s.,}
\end{align}
for any distribution of $(X,Y)$ with $\EXP (Y^2)<\infty$,
and in order to prove (\ref{1**}) it therefore suffices to show that
\begin{align}
\label{c2}
\lim_{n\to \infty} \int \{ m_n(x) - m^*_n(x) \}^2 \mu(dx)=0\quad \mbox{a.s.,}
\end{align}
for any distribution of $(X,Y)$ with $\EXP (Y^2)<\infty$.

\bigskip
\noindent
{\it Proof of (\ref{1*}).}
Because of
\begin{align*}
\int  m'_n(x)^{2} \mu(dx)
&=
\sum_j\frac{\left(\frac{\sigma_Z}{n} \sum_{i=1}^n\epsilon_{i,j}\right)^2}{ \tilde{\mu}_n(A_{h_n,j})^2}\IND_{\{\tilde\mu_n(A_{h_n,j})\ge c_nh_n^d\}} \IND_{\{j \leq N_n\}} \mu(A_{h_n,j})\\
&\le \sigma_Z^2
\sum_j\frac{\left(\frac{1}{n} \sum_{i=1}^n\epsilon_{i,j}\right)^2}{ c^2_nh_n^{2d}}\mu(A_{h_n,j}),
\end{align*}
it suffices to show that
\begin{align}
\label{11}
\lim_{n\to \infty} \sigma_Z^2 \sum_j\frac{\left(\frac{1}{n} \sum_{i=1}^n\epsilon_{i,j}\right)^2}{ c^2_nh_n^{2d}}\mu(A_{h_n,j})=0\quad \mbox{a.s.}
\end{align}
We note
\begin{align*}
\EXP\{\epsilon_{1,1}^{2q}\}
&=2^{-q}(2q)!
\le 2^{-q}(2q)^{2q}e^{-2q/3}
= 2^{q}q^{2q}e^{-2q/3},
\end{align*}
which together with Lemma \ref{B} implies
\begin{align*}
\EXP\left\{\left(\sum_{i=1}^n\epsilon_{i,1}\right)^{2q}\right\}
&\le 2(c^*q)^qn^q\EXP\{\epsilon_{1,1}^{2q}\}
\le  2^{q+1}{c^*}^qq^{3q}e^{-2q/3}n^q .
\end{align*}
Jensen's inequality yields
\begin{align*}
&\PROB\left\{ \sum_j\frac{\left(\frac{1}{n} \sum_{i=1}^n\epsilon_{i,j}\right)^2}{ c^2_nh_n^{2d}}\mu(A_{h_n,j})>\varepsilon \right\}\\
&=
\PROB\left\{ \left(\sum_j\frac{\left(\frac{1}{n} \sum_{i=1}^n\epsilon_{i,j}\right)^2}{ c^2_nh_n^{2d}}\mu(A_{h_n,j})\right)^q>\varepsilon^q \right\}\\
&\le
\PROB\left\{ \sum_j\frac{\left(\frac{1}{n} \sum_{i=1}^n\epsilon_{i,j}\right)^{2q}}{ c^{2q}_nh_n^{2qd}}\mu(A_{h_n,j})>\varepsilon^q \right\}\\
&\le
\varepsilon^{-q} \frac{\EXP\left\{\left(\frac{1}{n} \sum_{i=1}^n\epsilon_{i,1}\right)^{2q}\right\}}{ c^{2q}_nh_n^{2qd}}.
\end{align*}
Thus,
\begin{align*}
\PROB\left\{ \sum_j\frac{\left(\frac{1}{n} \sum_{i=1}^n\epsilon_{i,j}\right)^2}{ c^2_nh_n^{2d}}\mu(A_{h_n,j})>\varepsilon \right\}
&\le
 \frac{\varepsilon^{-q}}{ c^{2q}_nh_n^{2qd}n^{2q}}\EXP\left\{\left( \sum_{i=1}^n\epsilon_{i,1}\right)^{2q}\right\}\\
&\le
2 \frac{\varepsilon^{-q}2^{q}{c^*}^qq^{3q}e^{-2q/3}}{ c^{2q}_nh_n^{2qd}n^{q}}\\
&=
2 \left(\frac{q^{3}}{ nc^{2}_nh_n^{2d}\varepsilon/(2c^*e^{-2/3})}\right)^q .
\end{align*}
Choose
\begin{align*}
q:= \lfloor (nc^{2}_nh_n^{2d}\varepsilon/(2c^*e^{1/3}))^{1/3}\rfloor .
\end{align*}
Then
\begin{align*}
\PROB\left\{ \sum_j\frac{\left(\frac{1}{n} \sum_{i=1}^n\epsilon_{i,j}\right)^2}{ c^2_nh_n^{2d}}\mu(A_{h_n,j})>\varepsilon \right\}
&\le
2e^{-(nc^{2}_nh_n^{2d}\varepsilon)^{1/3}/(2c^*e^{1/3})^{1/3}+1}.
\end{align*}
Condition (\ref{2d}) yields
\begin{align*}
\sum_n\PROB\left\{ \sum_j\frac{\left(\frac{1}{n} \sum_{i=1}^n\epsilon_{i,j}\right)^2}{ c^2_nh_n^{2d}}\mu(A_{h_n,j})>\varepsilon \right\}
&<
\infty
\end{align*}
and thus the Borel-Cantelli lemma results in  (\ref{11}).

\bigskip
\noindent
{\it Proof of (\ref{c2}).}
If in the definition of $m_n$ we modify $\nu_n$ such that
\[
\nu_n(A_{h,j}) = \frac{1}{n} \sum_{i=1}^n [Y_i]_{-M_n}^{M_n} \IND_{\{X_i \in A_{h,j}\}},
\]
then a slight modification of the proof of Theorem 23.3 in \citet{gyorfi2006distribution}
together with the condition $M_n\to \infty$ implies (\ref{conv}), too.
We have that
\begin{align*}
&\int \{ m_n(x) -  m^*_n(x) \}^2 \mu(dx)\\
&=
\sum_{j=1}^{N_n} \left\{ \frac{\nu_n(A_{h_n,j})}{ \mu_n(A_{h_n,j})}\IND_{\{\mu_n(A_{h_n,j})\ge \log n/n\}} - \frac{\nu_n(A_{h_n,j})}{ \tilde{\mu}_n(A_{h_n,j})}\IND_{D_n(A_{h_n,j})}   \right\}^2  \mu(A_{h_n,j} )\\
&+
\sum_{j=N_n+1}^{\infty} \left\{ \frac{\nu_n(A_{h_n,j})}{ \mu_n(A_{h_n,j})}\IND_{\{\mu_n(A_{h_n,j})\ge \log n/n\}}   \right\}^2\mu(A_{h_n,j} )\\
&\le 
\sum_{j} \left\{\frac{\nu_n(A_{h_n,j})}{ \mu_n(A_{h_n,j})}\IND_{\{\mu_n(A_{h_n,j})\ge \log n/n\}} - \frac{\nu_n(A_{h_n,j})}{ \tilde{\mu}_n(A_{h_n,j})}\IND_{D_n(A_{h_n,j})}   \right\}^2\mu(A_{h_n,j} )\\
&+
\int_{S_n^c}m_n(x)^2\mu(dx),
\end{align*}
where
\begin{align*}
D_n(A_{h_n,j})
&= \left\{\tilde\mu_n(A_{h_n,j})\ge c_nh_n^d \right\}
= \left\{\mu_n(A_{h_n,j})+\tau_n(A_{h_n,j})\ge c_nh_n^d \right\}
\end{align*}
with $\tau_n(A_{h_n,j})=\frac{\sigma_W}{n} \sum_{i=1}^n\zeta_{i,j}$.
Since we have $\int m(x)^2 \mu(dx)<\infty$, then (\ref{conv}) together with $S_n\uparrow \Rd$ yields that
\begin{align*}
\int_{S_n^c}m_n(x)^2 \mu(dx)
&\to 0
\end{align*}
a.s.
Now (\ref{2d}) implies that
\begin{align*}
c_nh_n^d
\ge \log n/n
\end{align*}
if $n$ is large enough. For such large $n$, set
\begin{align*}
E_n
&:=
\sum_{j} \left\{ \frac{\nu_n(A_{h_n,j})}{ \mu_n(A_{h_n,j})}\IND_{\{\mu_n(A_{h_n,j})\ge \log n/n\}} - \frac{\nu_n(A_{h_n,j})}{ \tilde{\mu}_n(A_{h_n,j})}\IND_{D_n(A_{h_n,j})}   \right\}^2 \mu(A_{h_n,j} )\\
&\hspace{-6pt} =
\sum_j \frac{\nu_n(A_{h_n,j})^2}{ \mu_n(A_{h_n,j})^2}\IND_{\{\mu_n(A_{h_n,j})\ge \log n/n\}}\left\{1 - \frac{\mu_n(A_{h_n,j})}{ \tilde{\mu}_n(A_{h_n,j})}\IND_{D_n(A_{h_n,j})}   \right\}^2 \mu(A_{h_n,j} ).
\end{align*}
Note that
\begin{align*}
\left|1 - \frac{\mu_n(A_{h_n,j})}{ \tilde{\mu}_n(A_{h_n,j})}\IND_{D_n(A_{h_n,j})}   \right|
&=
\left|1 - \frac{\mu_n(A_{h_n,j})}{ \tilde{\mu}_n(A_{h_n,j})}\right|\IND_{D_n(A_{h_n,j})}+\IND_{D_n(A_{h_n,j})^c}\\
&=
\frac{|\tau_n(A_{h_n,j})|}{\tilde{\mu}_n(A_{h_n,j})}\IND_{D_n(A_{h_n,j})}+\IND_{D_n(A_{h_n,j})^c}\\
&=
\frac{|\tau_n(A_{h_n,j})|}{\tilde{\mu}_n(A_{h_n,j})}\IND_{\{\tilde{\mu}_n(A_{h_n,j})\ge c_nh_n^d\}}+\IND_{\{\tilde{\mu}_n(A_{h_n,j})< c_nh_n^d\}}.
\end{align*}
Let $A_n(x)$ denote the cube $A_{h_n,j}$, which contains $x$.
Then,
\begin{align*}
E_n
& =
\int m_n(x)^2\frac{\tau_n(A_n(x))^2}{\tilde{\mu}_n(A_n(x))^2}\IND_{D_n(A_n(x))}\mu(dx )\\
& \hspace{100pt}+
\int m_n(x)^2\IND_{\{\tilde{\mu}_n(A_n(x))< c_nh_n^d\}}\mu(dx )\\
&\le
\int m_n(x)^2 \frac{\tau_n(A_n(x))^2}{\tilde{\mu}_n(A_n(x))^2}\IND_{D_n(A_n(x))}\mu(dx )\\
&\hspace{100pt} +2
\int \{m_n(x)-m(x)\}^2\mu(dx )\\
&\hspace{100pt}+
2 \int m(x)^2 \IND_{\{\tilde{\mu}_n(A_n(x))< c_nh_n^d\}}\mu(dx )\\
&=:
F_n+G_n+H_n.
\end{align*}
Define the notation
\begin{align*}
\mu^*(A):=\int_A m(x)^2 \mu(dx)
\end{align*}
and
\begin{align*}
\mu_n^*(A):=\int_A m_n(x)^2\mu(dx).
\end{align*}
Since $\mathbb{E}(Y^2) < \infty$ we have $\int m(x)^2 \mu(dx )<\infty$ and hence we also have $\int m_n(x)^2 \mu(dx)  \leq \int m(x)^2 \mu(dx) + o(1) < \infty$ so that $\mu^*$ and $\mu_n^*$ are bounded measures. Thus, a very similar argument to that used to prove \eqref{11} shows that
\begin{equation}
\label{Eq:FourthMoments}
	\lim_{n \rightarrow \infty} \sum_j \frac{\tau_n(A_{h,j})^4}{c_n^4 h_n^{4d}} \mu_n^*(A_{h,j}) = 0 \quad \text{a.s.}
\end{equation}
where we use the fact that $\{\epsilon_{i,j}\}$, $\{\zeta_{i,j}\}$, $\{(X_i,Y_i)\}$
are independent. Then the Cauchy-Schwarz inequality, (\ref{c1}) and (\ref{Eq:FourthMoments}) imply
\begin{align*}
F_n
&=
\int \frac{\tau_n(A_n(x))^2}{\tilde{\mu}_n(A_n(x))^2}\IND_{\{\tilde{\mu}_n(A_n(x))\ge c_nh_n^d\}}\mu^*_n(dx )\\
&=
\sum_j \frac{\tau_n(A_{h_n,j})^2}{\tilde{\mu}_n(A_{h_n,j})^2}\IND_{\{\tilde{\mu}_n(A_{h_n,j})\ge c_nh_n^d\}}\mu^*_n(A_{h_n,j} )\\
&\le
\sum_j \frac{\tau_n(A_{h_n,j})^2}{c_n^2h_n^{2d}}\mu^*_n(A_{h_n,j} )\\
&\le
\sqrt{\sum_j \frac{\tau_n(A_{h_n,j})^4}{c^4_nh_n^{4d}}\mu^*_n(A_{h_n,j} )}\sqrt{\int m_n(x)^2 \mu(dx ) }\\
&\to 0\qquad \mbox{a.s.}
\end{align*}
The fact that $G_n\to 0$ a.s. follows from (\ref{c1}). We now turn to $H_n$. Since we have $\int m(x)^2 \mu(dx )<\infty$, it suffices to show that
\begin{align*}
\int \IND_{\{\tilde{\mu}_n(A_n(x))< c_nh_n^d\}}\mu(dx )
&\to 0\qquad \mbox{a.s.,}
\end{align*}
i.e.,
\begin{align*}
\sum_j \IND_{\{\tilde{\mu}_n(A_{h_n,j})< c_nh_n^d\}}\mu(A_{h_n,j} )
&\to 0\qquad \mbox{a.s.}
\end{align*}
By the inequality
\begin{align*}
&\IND_{\{\tilde{\mu}_n(A_{h_n,j})< c_nh_n^d\}}\\
& \leq
\IND_{\{|\tau_n(A_{h_n,j})| \geq c_nh_n^d/2\}}+ \IND_{\{|\mu_n(A_{h_n,j}) - \mu(A_{h_n,j})| \geq c_nh_n^d/2\}} + \IND_{\{\mu(A_{h,j}) <  2c_nh_n^d\}},
\end{align*}
one gets
\begin{align*}
H_n
& \leq
8\sum_j \frac{\tau_n(A_{h_n,j})^2}{c^2_nh_n^{2d}} \mu(A_{h_n,j})\\
&\quad+
\frac{8}{c_n^2 h_n^{2d}} \sum_j (\mu_n(A_{h_n,j})-\mu(A_{h_n,j}))^2 \mu(A_{h_n,j}) \\
&\quad+ 2\sum_j \IND_{\{\mu(A_{h,j}) <  2c_nh_n^d\}} \mu(A_{h_n,j}).
\end{align*}
(\ref{11}) implies that the first term tends to $0$ a.s.
Concerning the second term, we observe that, by the fact that Bernoulli random variables are subgaussian with variance proxy bounded by $1/4$, there exists $L>0$ such that for any $q \in \mathbb{N}$ we have
\begin{align*}
	\mathbb{E} \bigl[ \bigl( \mu_n(A_{h_n,j}) - \mu(A_{h_n,j}) \bigr)^{2q} \bigr] \leq n^{-q} (L q^{1/2})^{2q}.
\end{align*}
Thus, the second term tends to zero a.s. by using a very similar argument to that used to prove~\eqref{11}.
Finally, the third term is non-random. Let $S$ be a sphere centred at the origin such that $\mu(S^c) \leq \varepsilon$, and set
\[
	B_n := \bigcup_{j: \mu(A_{h_n,j}) < 2c_nh_n^d, A_{h_n,j} \cap S \neq \emptyset} A_{h_n,j}.
\]
If $\lambda$ denotes the Lebesgue measure, then
\begin{align*}
\sum_j \IND_{\{\mu(A_{h,j}) <  2c_nh_n^d\}} \mu(A_{h_n,j})
&\leq \mu(B_n) + \mu(S^c)
\leq \mu(B_n)+\varepsilon
\end{align*}
and
\begin{align*}
\mu(B_n)
& \leq  \sum_{j: \mu(A_{h_n,j}) < 2c_nh_n^d, A_{h_n,j} \cap S \neq \emptyset} 2c_n\lambda(A_{h_n,j}) \\
& \leq 2c_n \sum_{j: A_{h_n,j} \cap S \neq \emptyset} \lambda(A_{h_n,j}) \\
& \rightarrow 0.
\end{align*}
Thus, we proved that $H_n\to 0$ a.s.
\end{proof}

\begin{proof}[Proof of Theorem~\ref{patt}]
For the notation
\begin{align*}
	\bar{m}_n(x) = \frac{\tilde{\nu}_n(A_{h_n,j})}{\mu(A_{h_n,j})} \quad \text{when } x \in A_{h_n,j},
\end{align*}
the rule $g_n$ has the equivalent form
\begin{align*}
g_n(x) = sign\, \bar{m}_n(x).
\end{align*}
Theorem 2.2 in \citet{devroye2013probabilistic} implies that
\begin{align*}
L(g_n)-L^*
&=
\int\IND_{\{g_n(x)\ne g^*(x)\}}|m(x)|\mu(dx)\\
&=
\int\IND_{\{sign\,\bar{m}_n(x) \ne sign\, m(x) \}}|m(x)|\mu(dx)\\
&\le
\int [|m(x)-\bar{m}_n(x)|]_0^1\mu(dx).
\end{align*}
Write
\begin{align*}
m_n(x) = \frac{\nu_n(A_{h_n,j})}{\mu(A_{h_n,j})} \quad \text{when } x \in A_{h_n,j}.
\end{align*}
Then,
\begin{align*}
&\int [|m(x)-\bar{m}_n(x)|]_0^1\mu(dx)\\
&\le
\int |m(x)-m_n(x)|\mu(dx)
+
\int [|m_n(x)-\bar{m}_n(x)|]_0^1\mu(dx).
\end{align*}
By Theorem 23.1 in \citet{gyorfi2006distribution}, the first term tends to $0$ a.s.
Similarly to the previous proof, given $\epsilon>0$ let $S$ be a sphere centred at the origin such that $\mu(S^c) \leq \varepsilon$, and set
\[
	B_n := \bigcup_{j: A_{h_n,j} \cap S \neq \emptyset} A_{h_n,j}.
\]
Then,
\begin{align*}
\int [|m_n(x)-\bar{m}_n(x)|]_0^1\mu(dx)
&\le
\sum_{j\in B_n}[|\nu_n(A_{h_n,j})-\tilde{\nu}_n(A_{h_n,j})|]_0^1
+\mu(S^c)\\
&\le
\sum_{j\in B_n}\left[\left| \frac{\sigma_Z}{n} \sum_{i=1}^n  \epsilon_{i,j}\right|\right]_0^1
+\varepsilon\\
&\le
\sum_{j\in B_n}\left(\varepsilon h_n^d + \IND_{\left[\left| \frac{\sigma_Z}{n} \sum_{i=1}^n  \epsilon_{i,j}\right|\right]_0^1\ge \varepsilon h_n^d}\right)
+\varepsilon.
\end{align*}
For $n$ sufficiently large,
\begin{align*}
\sum_{j\in B_n}\varepsilon h_n^d
&\le 2\lambda (S) \varepsilon
\end{align*}
and Lemma \ref{A} implies
\begin{align*}
\sum_n\EXP\left\{
\sum_{j\in B_n} \IND_{\left| \frac{\sigma_Z}{n} \sum_{i=1}^n  \epsilon_{i,j}\right|\ge \varepsilon h_n^d}\right\}
&=
\sum_n|B_n| \PROB\left\{\left| \frac{\sigma_Z}{n} \sum_{i=1}^n  \epsilon_{i,1}\right|\ge \varepsilon h_n^d\right\}\\
&=
\sum_n\frac{4\lambda (S) }{h_n^d } e^{-n(\varepsilon h_n^d/\sigma_Z)^2/4}\\
&< \infty,
\end{align*}
where the last step follows from the condition $nh_n^{2d}/\log n\to \infty$. Therefore, by Markov's inequality and the Borel-Cantelli lemma, we have proved that
\begin{align*}
\limsup_n \int [|m_n(x)-\bar{m}_n(x)|]_0^1\mu(dx)
&\le
2\lambda (S) \varepsilon+\varepsilon
\end{align*}
a.s. Since $\epsilon>0$ was arbitrary, this completes the proof.
\end{proof}

\bibliography{bib}

%\begin{thebibliography}{99}

%\bibitem{BerrettButucea2019} Berrett, T. B. and Butucea, C. (2019) Classification under local differential privacy.
%\newblock \emph{Available at} \texttt{arXiv:1912.04629}.

%\bibitem{Duchi2018} Duchi, J. C., Jordan, M. I. and Wainwright, M. J. (2018) Minimax optimal procedures for locally private estimation.
%\newblock \emph{Journal of the American Statistical Association}, \textbf{113}, 182--215.

%\bibitem{GyKoKrWa02}
%Gy\"orfi, L., Kohler, M., Krzy\.zak, A. and Walk, H.:
%{\it A Distribution-Free Theory of Nonparametric Regression},
%Springer--Verlag, New York, 2002.

%\bibitem{LuNo99}
%G. Lugosi and A.B. Nobel,
%Adaptive model selection using empirical complexities,
%{\it  Annals of Statistics}, 27:1830--1864, 1999.

%\bibitem{ShWell2009} Shorack, G. R. and Wellner, J. A.
%\emph{Empirical processes with applications to statistics},
%Society for Industrial and Applied Mathematics, 2009.

%\end{thebibliography}

\end{document}